\documentclass[12pt, reqno]{amsart}
\setlength{\textwidth}{6 in} \setlength{\textheight}{8.6 in}
\hoffset=-46pt
\usepackage{amsmath}
\usepackage{amsthm}
\usepackage{amssymb}
\usepackage{enumerate}
\usepackage{graphicx}
\usepackage{graphicx}
\usepackage{amssymb,amsfonts}
\usepackage{amsmath}
\usepackage[colorlinks,linkcolor=blue,anchorcolor=blue,citecolor=blue]{hyperref}
\usepackage[numbers,sort&compress]{natbib}
\usepackage{marvosym}
\usepackage{epstopdf}
\usepackage{caption}
\usepackage{multicol}
\usepackage{multirow}
\usepackage{makebox}
\usepackage{subfigure}
\captionsetup{justification=centering}

\numberwithin{equation}{section}
\newtheorem{theo}{Theorem} 

\newtheorem{lem}{Lemma}
\newtheorem{cor}{Corollary}

\begin{document}

\title{The log-Minkowski inequality of curvature entropy for non-symmetric convex bodies}

\author[Zeng, Dong, Wang and Ma]{Chunna Zeng$^1$, Xu Dong$^1$, Yaling Wang$^1$, Lei Ma$^{2*}$}

\address{1.School of Mathematical Sciences,
 Chongqing Normal University,
Chongqing 401331, People's Republic of China}
\email{zengchn@163.com}

\address{1.School of Mathematical Sciences,
 Chongqing Normal University,
Chongqing 401331, People's Republic of China}
\email{dx13370724790@163.com}

\address{1.School of Mathematical  Sciences,
 Chongqing Normal University,
Chongqing 401331, People's Republic of China}
\email{wangyl7228@163.com}

\address{2.School of Sciences, Guangdong Preschool Normal College in Maoming,
Maoming 525200,  People's Republic of China}
\email{maleiyou@163.com}

\thanks{{\it Keywords}: the log-Minkowski inequality of curvature entropy, log-Minkowski inequality, dilation position, uniqueness  for the cone-volume measures.}
\thanks{The first author   was supported in part by the Major Special Project of NSFC (Grant No. 12141101), the Young Top-Talent program of Chongqing(Grant No. CQYC2021059145), NSF-CQCSTC(Grant No. cstc2020jcyj-msxmX0609), Technology Research Foundation of Chongqing Educational committee(Grant No. KJQN201900530, KJZD-K202200509), the Research Project of Chongqing Education Commission CXQT21014.}
\thanks{The second author was supported  by  the Characteristic innovation projects of universities in Guangdong province (Grant No. 2020KTSCX358).}
\thanks{{*}Corresponding author: Lei Ma}

\begin{abstract}
In an earlier paper \cite{mazeng} the authors introduced the notion of curvature entropy, and proved
 the plane log-Minkowski inequality of curvature entropy under the symmetry assumption. In this paper we demonstrate the plane log-Minkowski inequality of curvature entropy for general convex bodies. The equivalence of the uniqueness of cone-volume measure, the log-Minkowski inequality of volume, and the log-Minkowski inequality of curvature entropy for  general convex bodies in $\mathbb R^{2}$  are shown.
\end{abstract}

\maketitle

\section{Introduction }

The Brunn-Minkowski theory of convex bodies, also known as the Minkowski mixed volume theory, is the center of  convex geometric analysis. In fact the Brunn-Minkowski theory is the combination of the vector addition and volume. This point is fully demonstrated in Minkowski's paper \cite{H.Min}.
Every development of this theory just reflects this point:  once the addition of convex bodies is extended,  the Brunn-Minkowski theory will evolve into a new stage. The theory has experienced the classical stage, the $L_p$ stage, and the Orlicz stage. Over a hundred years, the Brunn-Minkowski theory has established important connections among many analytical branches in mathematics, such as probability and statistic, information theory, physics, ellipsoidal partial differential equations, algebraic geometry and so on (see \cite{R. Gardner} for more detail).

The Brunn-Minkowski inequality is a fundamental geometric inequality in the classical Brunn-Minkowski theory. It gives the log-concavity of the Lebesgue measure and implies the
classical isoperimetric inequality in the Euclidean space. Gardner's excellent survey
article \cite{R. Gardner}  describes its generalizations, consequences, and applications in geometry, analysis, probability, and other subjects.
During the last two decades, motivated by the study of geometry of $L_{p}$ spaces, the
$L_{p}$ Brunn-Minkowski theory has achieved enormous success when $p>1$. The classical
Brunn-Minkowski theory corresponds to the case of $p=1$. The case of $p<1$, in
particular, the singular case $p=0$, remains a challenge.

One of the most important problems in the $L_{p}$ Brunn-Minkowski theory is to establish
the corresponding Brunn-Minkowski inequality when $p<1$. In fact, the singular case
$p=0$ is strongest and implies all cases of $p>0$. Following the breakthrough of the work
\cite{{BLYZ2}}, the authors of \cite{BLYZ2} proved in \cite{BLYZ} the Brunn-Minkowski inequality for the case $p=0$ in
the plane which is called the log-Brunn-Minkowski inequality. The higher dimensional
case is still open and is considered as a major problem in convex geometry.

\textbf{The logarithmic Minkowski inequality of volume}. In \cite{BLYZ}, B\"or\"oczky, Lutwak, Yang and Zhang  conjectured that for origin-symmetric convex bodies $K$, $L$ in  $\mathbb{R}^n$ and $\lambda\in (0,1)$, then
\begin{align}\label{aa98}
V((1-\lambda) K+_0\lambda L)\ge V(K)^{1-\lambda} V(L)^ \lambda.
\end{align}
 (\ref{aa98}) is called log-Brunn-Minkowski inequality and note that it  is not true for general convex bodies. The log-Brunn-Minkowski inequality  is stronger than its classical counterpart  for origin-symmetric convex bodies. B\"or\"oczky, Lutwak, Yang and Zhang also demonstrated that     (\ref{aa98}) is equivalent to the following log-Minkowski inequality in $\mathbb{R}^n$
\begin{align}\label{aa97}
\int_{\mathbb{S}^{n-1}}\emph{\emph{log}} \left(\frac{h_L}{h_K} \right)dV_K \ge \frac{V(K)}{n}\emph{\emph{log}}\left(\frac{V(L)}{V(K)} \right),
\end{align}
where $V_K$ is the cone volume measure of  $K$. Furthermore, B\"or\"oczky, Lutwak, Yang and Zhang proved (\ref{aa97}) in planar case by using the uniqueness of cone-volume measures.

So far, the  log-Minkowski inequality and its extensions have generated a considerable literature. For instance, Rotem \cite{Rotem} solved   the log-Minkowski inequality in the complex space. Saroglou \cite{sar} verified the conjecture of (\ref{aa97}) when $K$ and $L$ are both simultaneously unconditional with respect to the same orthogonal basis, meaning that they are invariant under reflections with respect to the principle coordinate hyperplanes $x_i=0$. Colesanti, Livshyts and Marsiglietti \cite{CLM} verified (\ref{aa97}) locally for small-enough $C^2$-perturbations of the Euclidean ball $B$.  Xi and Leng \cite{2} proved Dar's conjecture and demonstrated the equivalence between the log-Brunn-Minkowski inequality and the log-Minkowski inequality for non-symmetric convex bodies.  Tao Xiong and Xiong \cite{TX} proved the log-Minkowski inequality for cylinders in $\mathbb R^{3}.$ Ma, Zeng and Wang \cite{mazeng} showed that the log-Minkowski inequality of volume is equivalent to the log-Minkowski inequality of curvature entropy in $\mathbb R^{n}.$

 \textbf{The $L_p$ Minkowski problem}. The $L_p$ Minkowski problem is one of the central problems in contemporary convex geometric analysis. The classical Minkowski problem states that: given a finite Borel measure $\mu$ on  $\mathbb{S}^{n-1}$,  what are the necessary and sufficient conditions so that $\mu$ is the surface area measure of a convex body $K$? Minkowski \cite{H.Min2} solved  this question when the given measure is either discrete or has a continuous density. Later, Aleksandrov \cite{AD2,AD3}, Fenchel and Jessen \cite{Fenchel} solved the problem for general measure.       It showed that if $\mu$ is not concentrated on any closed hemispherical surface, then $\mu$ is the surface area measure of $K$ when and only when its centroid is at the origin. The $L_p$ Minkowski problem is an extension of the classical   Minkowski problem and has achieved great developments.  The  $L_p$ Minkowski problem in the plane is solved by Stancu \cite{Stancu1,Stancu2}, Umanskiy \cite{Umanskiy}, Chen \cite{Chen}, and Jiang \cite{Jiang}. The solution of $L_p$ Minkowski problem is homothetic solutions of Gauss curvature flow, see \cite{Andrews1,Andrews2,Chow,Gage,Tso}. When $\mu$ is Lebesgue measure on unit circle $\mathbb{S}^1$, then the solution of $L_p$ Minkowski problem in $\mathbb{R}^2$ is homothetic solutions of misdirected curve flow, see Andrews \cite{Andrews2}. Obviously, the $L_1$ Minkouski problem is the classical Minkowski problem, while the $L_0$ Minkowski problem is the logarithmic Minkowski problem.

\textbf{The conjectured uniqueness of log-Minkowski problem.}  The log-Minkowski problem  is the most important case because  the   cone-volume measure is the only  $SL(n)$  invariant measure among all the $L_p$ surface area measure.

[\emph{Lutwak}] If $K$ and $L$ are symmetric smooth strictly convex sets with $V_K(\omega)=V_L(\omega)$, where $\omega \in \mathbb{S}^{n-1}$ is a Borel set, then $K=L$.

 Firey \cite{W.Firey2} proved that  if the cone-volume measure of a origin-symmetric convex body is a positive constant multiplied by spherical Lebesgue measure in $\mathbb{R}^n$, then the body must be an Euclidean ball. In \cite{BLYZ2}, B\"or\"oczky, Lutwak, Yang and Zhang showed that  if $K$, $L$ are origin-symmetric and have the same cone-volume measures in $\mathbb{R}^2$, then they are either parallelograms with parallel sides or $K=L$. The special case of smooth origin-symmetric planar convex bodies with positive curvature was proved
  by Gage \cite{Gage2}.     A nature question is whether the uniqueness  of  cone-volume measure holds without symmetrical condition.  In \cite{2}, Xi and Leng gave the definition of dilation position for the first time to prove the log-Brunn-Minkowski inequality for two convex bodies $K, L \in \mathcal{K}_{0}^{2}$     and solved the planar Dar's conjecture.  Zhu \cite{Zhu} and   Stancu \cite{Stancu1} solved  the case of discrete measure. However, the uniqueness condition is still remain open in higher dimensional case.

\textbf{The log-Minkowski inequality of curvature entropy for symmetric convex bodies.}
There are many entropy inequalities which have deep relationships with the Brunn-Minkowski inequality, such as Cramer-Rao inequalities, Fisher information inequality,    moment-entropy inequality, entropy power inequality, Stams inequality and so on, see \cite{Lutwak1,Lutwak2,Max}. For example, the entropy power inequality states that the effective variance (entropy power) of the sum of two independent random variables is greater than the sum of their effective variances. While the Brunn-Minkowski inequality states that the effective radius of the set sum of two sets is greater than the sum of their effective radii. Both these inequalities are recast in a form that enhances their similarity.

In  \cite{mazeng}, the authors introduced the notion of curvature entropy in $\mathbb{R}^n$. Assume that $K,~L \in \mathcal{K}^{n}_{0}$, then the curvature entropy  $E\left(K, L\right)$ is defined as
\begin{equation}\label{E(K,L)=-}
E(K,L)=-\int_{\mathbb{S}^{n-1}} \log \frac{H_{n-1}\left(L\right)}{H_{n-1}\left(K\right)} dV_K,
\end{equation}
where $H_{n-1}\left(\cdot\right)$ denotes the Gauss curvature of the boundary of a convex body. They obtained  the plane log-Minkowski inequality of curvature entropy in $\mathbb{R}^2$ when one convex body is symmetric. And they also demonstrated that under symmetry condition,  the uniqueness  of cone-volume measure, the log-Minkowski inequality of volume and the  log-Minkowski inequality of curvature entropy  are equivalent in $\mathbb{R}^n$. They conjectured the following problem

\textbf{Problem 1.} Suppose that $K, L\in \mathcal{K}^{n}_{0}$ are  strictly convex bodies  of $C^{2}$ boundaries  with $K$ symmetric in $\mathbb{R}^{n}$,
then
\begin{equation}\label{34}
E(K, L)\leq\frac{n-1}{n} V(K)\log \frac{ V(L)}{ V(K)},
\end{equation}
with equality holds when and only when $K$ and $L$ are homothetic.

By using techniques offered by \cite{2} and \cite{1}, the authors obtain the following plane log-Minkowski inequality of curvature entropy for general convex bodies without the symmetry assumption.

\begin{theo}\label{the1}
 Let $K, L \in  \mathcal{K}_{0}^{2}$  be two smooth,  planar  strictly convex bodies. If $K$ and $L$ are at a dilation position, then
\begin{equation}\label{fc16}
E(K,L)\leq\frac{V(K)}{2}\log\frac{ V(L)}{ V(K)},
\end{equation}
the equality holds when and only when $K$ and $L$ are homothetic.
\end{theo}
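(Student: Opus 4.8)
The plan is to pass to the planar support-function setting and then reduce the statement, via the correspondence between curvature entropy and volume from \cite{mazeng}, to the log-Minkowski inequality of volume for bodies at a dilation position proved by Xi and Leng \cite{2}. Writing $h_K,h_L$ for the support functions on $\mathbb{S}^1$ and $\rho_K=h_K+h_K''$, $\rho_L=h_L+h_L''$ for the radii of curvature, the Gauss curvature is $H_1=1/\rho$ and the cone-volume measure is $dV_K=\tfrac12 h_K\rho_K\,d\theta$, so that $E(K,L)=\int_{\mathbb{S}^1}\log(\rho_L/\rho_K)\,dV_K$. Since replacing $L$ by a dilate $sL$ shifts both sides of \eqref{fc16} by the same amount $V(K)\log s$ and preserves the dilation position, I would first normalize $V(L)=V(K)$, which turns the target into the clean statement
\[
\int_{\mathbb{S}^1}\log\frac{\rho_L}{\rho_K}\,dV_K\le 0,
\]
with equality only when $K$ and $L$ are homothetic.

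Next I would record that $L=K$ is a critical point of the scale-invariant functional $F(L)=\int_{\mathbb{S}^1}\log\rho_L\,dV_K-\tfrac{V(K)}2\log V(L)$. Using $\delta\rho_L=\eta+\eta''$ for a variation $h_L=h_K+\eta$ and the self-adjointness of $h\mapsto h+h''$, integration by parts on $\mathbb{S}^1$ gives $\int(\eta+\eta'')h_K\,d\theta=\int\eta\rho_K\,d\theta$, whence $\delta F|_{L=K}=0$ for every admissible $\eta$. This matches the expected equality case and isolates the genuine difficulty: each of $\int_{\mathbb{S}^1}\log\rho_L\,dV_K$ and $\tfrac{V(K)}2\log V(L)$ is concave in $h_L$ along Minkowski combinations, so $F$ is a difference of concave functionals and criticality at $K$ does not by itself force a global maximum. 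This is exactly the place where the symmetry assumption of Problem~1 was exploited in \cite{mazeng}, and where the dilation position must now take over. A naive route through Jensen's inequality is insufficient here, since Jensen only yields $E(K,L)\le V(K)\log\big(V_1(K,L)/V(K)\big)$, and the classical Minkowski inequality $V_1(K,L)\ge\sqrt{V(K)V(L)}$ then points in the wrong direction.

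The heart of the argument is therefore to upgrade criticality to the global inequality using the dilation position. The plan is to decompose
\[
E(K,L)=\int_{\mathbb{S}^1}\log\frac{h_L}{h_K}\,dV_K+\int_{\mathbb{S}^1}\log\frac{\rho_L/h_L}{\rho_K/h_K}\,dV_K,
\]
bound the first integral below by $\tfrac{V(K)}2\log\big(V(L)/V(K)\big)$ by invoking the log-Minkowski inequality of volume at a dilation position \cite{2}, and then show that at a dilation position the second, purely curvature, term is controlled in the opposite direction by precisely the corresponding volume deficit, so that the two integrals balance to give the target bound. The inclusions $r(L,K)\,K\subseteq L\subseteq R(L,K)\,K$ provided by the dilation position \cite{2}, combined with the integration-by-parts identities for $\rho=h+h''$ and the techniques of \cite{1}, are what I expect to make this remainder estimate hold. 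This balancing is the step I expect to be the main obstacle, both because the inequality is simply false for non-symmetric bodies not at a dilation position and because the remainder term is not sign-definite on its own.

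Finally, I would read off the equality characterization by tracing equality back through the chain: equality in \eqref{fc16} forces simultaneous equality in the volume log-Minkowski inequality \cite{2} and in the remainder estimate, and the rigidity of the former at a dilation position already yields that $K$ and $L$ are homothetic; conversely homothety gives equality by the direct computation $E(K,cK)=V(K)\log c=\tfrac{V(K)}2\log\big(V(cK)/V(K)\big)$. Throughout, the smoothness and strict convexity of $K$ and $L$ ensure that $\rho_K,\rho_L$ are positive and the integrals finite, so the variational and integration-by-parts manipulations are legitimate.
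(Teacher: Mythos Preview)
Your proposal has a genuine gap at the ``balancing'' step. After decomposing
\[
E(K,L)=\int_{\mathbb{S}^1}\log\frac{h_L}{h_K}\,dV_K+\int_{\mathbb{S}^1}\log\frac{\rho_L/h_L}{\rho_K/h_K}\,dV_K,
\]
you bound the \emph{first} integral from \emph{below} by the target via Xi--Leng. But you need an \emph{upper} bound on the \emph{sum}; a lower bound on one summand points the wrong way unless you can show the remainder is at most \emph{minus} the volume deficit, and you give no mechanism for that beyond hoping that the inclusions $rK\subset L\subset RK$ and ``techniques of \cite{1}'' will do it. They will not bound that particular remainder directly, and this is exactly the step you flag as the main obstacle without resolving it.

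The paper's proof bypasses the decomposition entirely. It applies the nonsymmetric Green--Osher inequality (Lemma~\ref{lem1}, which \emph{is} your reference \cite{1}) with the strictly convex choice $F(x)=-\log x$. Since the relative Steiner polynomial has roots $t_1,t_2$ with $t_1t_2=V(K)/V(L)$, the right-hand side collapses to $-\log\bigl(V(K)/V(L)\bigr)$, and the left-hand side is $-\tfrac{2}{V(L)}E(L,K)$; this gives \eqref{fc16} in one line (after swapping $K\leftrightarrow L$, legitimate because dilation position is symmetric). Equality comes straight from the equality case of Lemma~\ref{lem1}.

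You were in fact one observation away from a second valid route. You dismissed Jensen because applying it to $E(K,L)$ alone yields $V(K)\log\bigl(V(K,L)/V(K)\bigr)$, which is too weak. But apply Jensen to the \emph{combined} integrand $\log\bigl(h_L\rho_L/(h_K\rho_K)\bigr)$: since $\int_{\mathbb S^1}\tfrac{h_L\rho_L}{h_K\rho_K}\,dV_K=V(L)$, one gets
\[
\int_{\mathbb{S}^1}\log\frac{h_L}{h_K}\,dV_K+E(K,L)\le V(K)\log\frac{V(L)}{V(K)},
\]
and \emph{now} the Xi--Leng lower bound on the first integral subtracts off exactly $\tfrac{V(K)}{2}\log\bigl(V(L)/V(K)\bigr)$ to give \eqref{fc16}. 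The paper records this argument too, as its proof of Theorem~\ref{the3}.
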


As an application of  above theorem, we obtain the following uniqueness condition for planar cone-volume measure of   non-symmetric convex bodies.

\begin{theo}\label{the2}
 Let $K, L \in  \mathcal{K}_{0}^{2}$ be two smooth, planar  strictly convex bodies. If $K$ and $L$ are at a dilation position and have the same cone-volume measure, then $K=L.$
\end{theo}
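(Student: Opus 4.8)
The plan is to show that the two hypotheses force equality in the curvature-entropy inequality of Theorem~\ref{the1}, and then to read off $K=L$ from its equality case together with the dilation-position normalization.

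First I would extract from the hypothesis $V_K=V_L$ (equal cone-volume measures on $\mathbb{S}^{n-1}$, $n=2$) the two facts that drive everything: comparing total masses gives $V(K)=V_K(\mathbb{S}^{n-1})=V_L(\mathbb{S}^{n-1})=V(L)$, and, as measures, $dV_K=dV_L$. In particular the right-hand side of \eqref{fc16} vanishes for both orderings, since $\frac{V(K)}{2}\log\frac{V(L)}{V(K)}=0=\frac{V(L)}{2}\log\frac{V(K)}{V(L)}$.

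Next I would apply Theorem~\ref{the1} to both ordered pairs $(K,L)$ and $(L,K)$ --- the dilation-position relation being symmetric in its two arguments --- to obtain
\begin{equation}
E(K,L)\le 0\qquad\text{and}\qquad E(L,K)\le 0.
\end{equation}
The pivotal observation is then the exact cancellation $E(K,L)+E(L,K)=0$: from the definition \eqref{E(K,L)=-} the integrand for $(L,K)$ is the negative of that for $(K,L)$, and since $dV_L=dV_K$,
\begin{equation}
E(L,K)=-\int_{\mathbb{S}^{n-1}}\log\frac{H_{n-1}(K)}{H_{n-1}(L)}\,dV_L=\int_{\mathbb{S}^{n-1}}\log\frac{H_{n-1}(L)}{H_{n-1}(K)}\,dV_K=-E(K,L).
\end{equation}
Together with the two inequalities this yields $0=E(K,L)+E(L,K)\le 0$, forcing $E(K,L)=E(L,K)=0$; hence equality holds in \eqref{fc16}, and the equality clause of Theorem~\ref{the1} gives that $K$ and $L$ are homothetic.

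Finally I would rule out a nontrivial homothety. Homothety means $L=cK+v$ for some $c>0$ and $v\in\mathbb{R}^2$; being at a dilation position pins the center of the homothety at the origin, so $v=0$ and $L=cK$. Then $V(L)=c^2V(K)$, and $V(K)=V(L)$ forces $c=1$, whence $K=L$. I expect the genuine obstacle to sit in this last step: one must verify that the dilation-position hypothesis really eliminates the translational ambiguity left by the word ``homothetic'' (equivalently, that two homothetic bodies placed at a dilation position are true dilates $L=cK$ rather than merely $L=cK+v$). A minor auxiliary point is to confirm that the dilation-position relation is symmetric, so that Theorem~\ref{the1} applies verbatim to $(L,K)$; the cancellation and the equal-volume reduction are otherwise routine.
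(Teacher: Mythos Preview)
Your proposal is correct and follows essentially the same route as the paper: apply Theorem~\ref{the1} in both directions to get $E(K,L)\le 0$ and $E(L,K)\le 0$, then use $dV_K=dV_L$ to force equality (the paper writes out the chain of integrals explicitly, whereas you package it as $E(K,L)=-E(L,K)$), and finally invoke the equality case plus $V(K)=V(L)$. Your closing concerns are well-placed but easily dispatched---symmetry of the dilation position is immediate from $r(K,L)=1/R(L,K)$, and the translation ambiguity is killed either by your dilation-position argument or, more directly, by noting that $V_{K+v}=V_K$ forces $v=0$ since cone-volume measure is not translation invariant; the paper itself glosses over this last point.
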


One of main aims of this paper is to show that for two general convex bodies in $\mathbb{R}^{2}$, the log-Minkowski inequality of volume and the log-Minkowski inequality of curvature entropy (\ref{fc16}) are equivalent. Furthermore, the equivalence of  the uniqueness  of cone-volume measure, the log-Minkowski inequality of volume and the  log-Minkowski inequality of curvature entropy  are established in $\mathbb{R}^2$.

\begin{theo}\label{the3}
Let $K, L \in  \mathcal{K}_{0}^{2}$ be two smooth, planar  strictly convex bodies. If $K$ and $L$ are at a dilation position, then the uniqueness  for cone-volume measures, the log-Minkowski inequality of volume and the  log-Minkowski inequality of curvature entropy are equivalent.
\end{theo}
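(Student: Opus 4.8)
The plan is to prove the three-way equivalence through the cyclic chain $(\mathrm A)\Rightarrow(\mathrm B)\Rightarrow(\mathrm C)\Rightarrow(\mathrm A)$, where $(\mathrm A)$ denotes the uniqueness of cone-volume measures, $(\mathrm B)$ the planar case of the log-Minkowski inequality of volume \eqref{aa97} together with its equality condition, and $(\mathrm C)$ the log-Minkowski inequality of curvature entropy \eqref{fc16} together with its equality condition. First I would record the planar dictionary: writing boundaries in terms of the outer normal angle $\theta$, one has $dV_K=\tfrac12 h_K\rho_K\,d\theta$ with $\rho_K=h_K+h_K''$ the radius of curvature and $H_1(K)=1/\rho_K$, so that $\int_{\mathbb S^1}dV_K=V(K)$ and, by \eqref{E(K,L)=-}, $E(K,L)=\int_{\mathbb S^1}\log(\rho_L/\rho_K)\,dV_K$. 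Since $\rho$ is invariant under a translation of $L$, so is $E(K,L)$; I would use this in the equality analysis.

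For $(\mathrm B)\Rightarrow(\mathrm C)$ I would introduce the cone-volume densities $g_K=\tfrac12 h_K\rho_K$ and $g_L=\tfrac12 h_L\rho_L$ with respect to $d\theta$, so that $\int_0^{2\pi}g_K\,d\theta=V(K)$ and $\int_0^{2\pi}g_L\,d\theta=V(L)$, and then observe that
\[
\int_{\mathbb S^1}\log\frac{h_L}{h_K}\,dV_K+E(K,L)=\int_0^{2\pi}\log\frac{g_L}{g_K}\,g_K\,d\theta\le V(K)\log\frac{V(L)}{V(K)},
\]
the inequality being Jensen's inequality for the concave logarithm against the probability measure $g_K\,d\theta/V(K)$. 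Feeding in $(\mathrm B)$, namely $\int_{\mathbb S^1}\log(h_L/h_K)\,dV_K\ge\tfrac{V(K)}2\log(V(L)/V(K))$, immediately gives $E(K,L)\le\tfrac{V(K)}2\log(V(L)/V(K))$, which is $(\mathrm C)$. For the equality clause I would note that equality in $(\mathrm C)$ forces equality both in Jensen and in $(\mathrm B)$; consistency of the two bounds actually forces a homothety between $K$ and $L$ to be a pure dilation $L=cK$, for which $g_L=c^2 g_K$ and equality holds throughout, so the equality conditions match.

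For $(\mathrm C)\Rightarrow(\mathrm A)$, suppose $K$ and $L$ are at a dilation position with $V_K=V_L$; taking total masses gives $V(K)=V(L)$, so the right-hand side of \eqref{fc16} vanishes. Applying $(\mathrm C)$ to $(K,L)$ gives $\int_{\mathbb S^1}\log(\rho_L/\rho_K)\,dV_K\le0$, while applying it to $(L,K)$---legitimate as the dilation-position relation is symmetric---gives $\int_{\mathbb S^1}\log(\rho_K/\rho_L)\,dV_L\le0$. Because $dV_K=dV_L$, summing the two produces $0\le0$, whence both are equalities; the equality clause of $(\mathrm C)$ then makes $K$ and $L$ homothetic, and matching cone-volume measures forces $L=K$.

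The substantive implication is $(\mathrm A)\Rightarrow(\mathrm B)$, which I would carry out by the variational argument of B\"or\"oczky--Lutwak--Yang--Zhang adapted to the dilation-position framework of \cite{2}. Fixing $K$, I would minimize $Q\mapsto\int_{\mathbb S^1}\log h_Q\,dV_K$ over smooth strictly convex bodies $Q$ at a dilation position with $K$ and satisfying $V(Q)=V(L)$; the body $L$ is admissible, so the minimum is at most $\int_{\mathbb S^1}\log h_L\,dV_K$. A Blaschke-selection argument provides a minimizer $\bar Q$, and the Euler--Lagrange equation for a support-function variation $h_{\bar Q}+t\varphi$ reads $dV_K/h_{\bar Q}=\lambda\,dS_{\bar Q}$, that is, $dV_K$ is proportional to the cone-volume measure $dV_{\bar Q}$. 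Rescaling $\bar Q$ to equalize total masses and invoking the uniqueness hypothesis $(\mathrm A)$ then forces $\bar Q$ to be a dilate of $K$; evaluating the functional on this dilate of volume $V(L)$ gives exactly $\tfrac{V(K)}2\log(V(L)/V(K))$, and comparison with $L$ yields $(\mathrm B)$, with equality precisely when $L$ is itself the minimizing dilate. I expect the main obstacle to lie here: establishing existence, non-degeneracy and regularity of $\bar Q$, and---most delicately---checking that both the competitor class and the mass-equalizing rescaling stay at a dilation position with $K$, so that the non-symmetric uniqueness statement (Theorem \ref{the2}) can be applied. This dilation-position bookkeeping is exactly what replaces the symmetry hypothesis of the original planar argument.
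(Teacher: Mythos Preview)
Your cycle matches the paper's: the Jensen step for $(\mathrm B)\Rightarrow(\mathrm C)$ is verbatim the paper's proof of Theorem~\ref{the3}, and your $(\mathrm C)\Rightarrow(\mathrm A)$ is precisely the paper's Theorem~\ref{the2}. The only divergence is at $(\mathrm A)\Rightarrow(\mathrm B)$: the paper does not carry out the variational argument but simply invokes Lemma~\ref{lem4} (the Xi--Leng log-Minkowski inequality from \cite{2}) as an already-established fact, so that the sole new content in the proof of Theorem~\ref{the3} is the Jensen computation. Your sketch of the BLYZ-type minimization is the correct mechanism behind that citation, and the obstacles you flag---existence and non-degeneracy of the minimizer, and preservation of the dilation-position relation under the mass-equalizing rescaling---are exactly the technical points handled in \cite{2}; but for the purposes of this theorem the paper is content to cite rather than reproduce that work.
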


%
%



\section{~Preliminaries}

In this section, we list  some notation and   basic facts about convex bodies. Good general references for the theory of  convex bodies see, e.g., \cite{Gardner,Gruber,Schneider}.

Write $x\cdot y$ for the standard inner product of $x,y \in \mathbb{R}^n$. A \emph{convex body} in $\mathbb R^{n}$ is a compact convex subset  with non-empty interior  in $\mathbb{R}^n$. Denote by $\mathcal{K}^{n}$ and $\mathcal{K}^{n}_{0}$ the set of convex bodies and convex bodies containing the origin in their interior in $\mathbb{R}^n$.
A convex body $K$  is uniquely determined by its support function $h_K: \mathbb{R}^n \rightarrow \mathbb{R}$,
\begin{equation*}
h_K(x)=\max\left\{x\cdot y:y \in K\right\}.
\end{equation*}
The support function is positively homogeneous of degree one and subadditive.

 Assume that $K$ is a strictly convex body of $C^2$ boundary in $\mathbb{R}^n$. Let $u: \partial K\rightarrow \mathbb{S}^{n-1}$ be the Gauss map. Then the surface area element $dS(x)$ of $\partial K$ and the surface area element of $\mathbb{S}^{n-1}$ at $u$ are related by

\begin{equation}\label{dSX}
dS(x)=\frac{1}{H_{n-1}}du,
\end{equation}
where $H_{n-1}$ is the Gauss curvature of $\partial K$ at $x$. By the following expression of the reciprocal Gauss curvature (\cite{Zhang})

\begin{equation*}
\frac{1}{H_{n-1}}=\det(h_{ij}+h_K\delta_{ij}),
\end{equation*}
where $h_{ij}$ is the covariant derivative of $h$ with respect to an orthonormal frame on $\mathbb{S}^{n-1}$ and $\delta_{ij}$ is the Kronecker delta,
 (\ref{dSX}) can be rewritten as
\begin{equation*}
dS(x)=\det(h_{ij}+h_K\delta_{ij})du.
\end{equation*}
Thus

\begin{equation*}
V\left(K\right)=\frac{1}{n}\int_{\mathbb{S}^{n-1}}h_K\det(h_{ij}+h_K\delta_{ij})du.
\end{equation*}

The relative curvature radius of $K$ with respect to $L$ in $\mathbb{R}^n$ is the ratio of Gauss curvature of $K$ and $L$, which is defined by

\begin{equation}\label{B1}
\rho_{K,L}=\frac{H_{n-1}\left(L\right)}{H_{n-1}\left(K\right)}=\frac{det\left(h_{ij}\left(K\right)+h_K\delta_{ij}\right)}{det\left(h_{ij}\left(L\right)+h_L\delta_{ij}\right)}.
\end{equation}
Specially, in the plane case,
\begin{equation}\label{B2}
\rho_{K,L}=\frac{\kappa_L}{\kappa_K}=\frac{h_K+h_K^{\prime\prime}}{h_L+h_L^{\prime\prime}},
\end{equation}
where $\kappa$ denotes the curvature of closed convex curve. If $K$ and $L$ are homothetic, then $\rho_{K,L}$ is a constant.

 For  $K, L \in\mathcal{K}^{n}$, the Minkowski addition is
defined by
$K+L=\{x+y:x \in K, y\in L\},$
and the multiple of ~$K$ by the scalar $\lambda> 0$ is
$\lambda K=\{\lambda x: x\in K\}.$
The Minkowski-Steiner formula states that
 \begin{align}\label{MS}
V(K+\lambda L)=\sum\limits _{i=0}^{n} \left( \begin{aligned}
        &n \\
           &i\end{aligned} \right)V_{i}(K, L)\lambda^{i},
\end{align}
where $\lambda > 0.$ $V(\cdot)$ is the $n$-dimensional volume and the
coefficients
\begin{equation*}
V_{i}(K, L)=V(\underbrace{K, \cdots, K}_{n-i}
\underbrace{L,\cdots,L}_{i})
\end{equation*}
are called \emph{ mixed volumes} of $K$ and $L,$ which are
nonnegative, symmetric in the indices, multilinear, translation invariant and homogeneous.

Actually, (\ref{MS}) can be rewritten as the following form, which gives the relative Steiner polynomial of $K$ with respect to $L$, that is
\begin{equation}\label{mix volume2}
V\left(K+t L\right)=V(K)+2V(K,L)t+V(L)t^2.
\end{equation}
From the Minkowski inequality
\begin{equation*}
V^{2}(K,L)-V(K)V(L)\ge0,
\end{equation*}
it follows that the expression $V(K+tL)=0$ has two negative real roots. Denote by $t_1$ and $t_2$ ($t_1>t_2$) the two roots of the relative Steiner polynomial of $K$ with respect to $L$
\begin{equation}\label{fc11}
t_1=-\frac{V(K,L)}{V(L)} +\frac{\sqrt{V(K,L)^{2}-V(K)V(L)}}{V(L)},
\end{equation}
\begin{equation}\label{fc12}
t_2=-\frac{V(K,L)}{V(L)} -\frac{\sqrt{V(K,L)^{2}-V(K)V(L)}}{V(L)}.
\end{equation}

The following Lemma introduced nonsymmetric extension of \emph{Green-Osher inequality}.
\begin{lem}\label{lem1}(\cite{1})
Let $K$ and $L$ be two smooth,  planar  strictly convex bodies and $\rho(\theta)$ the relative curvature radius of $K$ with respect to $L.$ If $K$ and $L$ are at a dilation position and ~$F(x)$ is a strictly convex function on ~$(0,+\infty),$ then
\begin{equation}\label{fc14}
\frac{1}{V(L)}\int^{2\pi}_{0} F(\rho(\theta))h_L(\theta)(h_L(\theta)+h_L''(\theta)) d\theta \geq F(-t_1)+F(-t_2),
\end{equation}
where ~$t_1,$~$t_2$ are the two roots of the relative Steiner polynomial of~$K$ with respect to~$L,$ and the equality  holds when and only when $K$ and $L$ are homothetic.
\end{lem}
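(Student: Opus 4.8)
The plan is to read \eqref{fc14} as a two-point convex-comparison (convex order) inequality and to reduce it, by linearity in $F$, to a one-parameter family of Bonnesen-type estimates. First I would normalize: since $\int_0^{2\pi}h_L(h_L+h_L'')\,d\theta=2V(L)$, the measure $d\nu:=\frac{1}{2V(L)}h_L(h_L+h_L'')\,d\theta$ is a probability measure on $[0,2\pi]$, and \eqref{fc14} is equivalent to
\begin{equation*}
\int_0^{2\pi}F(\rho)\,d\nu\ \ge\ \tfrac12\bigl[F(-t_1)+F(-t_2)\bigr].
\end{equation*}
The first moment already matches the target: using $\rho(h_L+h_L'')=h_K+h_K''$ from \eqref{B2} together with the mixed-volume formula, one gets $\int_0^{2\pi}\rho\,d\nu=V(K,L)/V(L)$, which by Vieta's formulas applied to the relative Steiner polynomial of $K$ with respect to $L$ equals $\tfrac12[(-t_1)+(-t_2)]$. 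Hence affine $F$ give equality, and only the convexity of $F$ beyond its mean can contribute.

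Next I would reduce to elementary convex functions. Writing a convex $F$ on $(0,+\infty)$ as a superposition of an affine function and cut functions $x\mapsto(x-s)_+$ (via $F''\ge0$ as a measure), and using that both sides above are linear in $F$ while the affine part already gives equality, it suffices to prove, for every level $s>0$,
\begin{equation*}
\int_0^{2\pi}(\rho-s)_+\,h_L(h_L+h_L'')\,d\theta\ \ge\ V(L)\bigl[(-t_1-s)_+ + (-t_2-s)_+\bigr].
\end{equation*}
Equivalently, letting $\ell$ be the chord of $F$ through the points $-t_1,-t_2$, one computes $\int\ell(\rho)\,d\nu=\tfrac12[F(-t_1)+F(-t_2)]$, so the claim becomes $\int_0^{2\pi}(F-\ell)(\rho)\,d\nu\ge0$; since $F-\ell\ge0$ outside $[-t_1,-t_2]$ and $F-\ell\le0$ inside, the real content is that $\rho$ must be spread beyond the interval $[-t_1,-t_2]$ by exactly enough for the outside mass to dominate.

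The crux, and the step I expect to be the main obstacle, is this cut inequality, and it is here that the dilation-position hypothesis of \cite{2} must enter. The difficulty is that the constraint fixing the two comparison points, $(-t_1)(-t_2)=V(K)/V(L)$, is not a functional of the distribution of $\rho$ alone: through $V(K)=\tfrac12\int_0^{2\pi}h_K\,\rho\,(h_L+h_L'')\,d\theta$ it couples $\rho$ to the support functions $h_K,h_L$. I would exploit the dilation position, which places $K$ and $L$ so that the relative inradius and circumradius dilates of $L$ are concentric with $K$ and hence $r(K,L)\,h_L\le h_K\le R(K,L)\,h_L$, to control the level sets of $\rho$ and close the comparison; recovering $h_K$ from $\rho$ by solving $h_K+h_K''=\rho(h_L+h_L'')$ makes this coupling explicit. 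An alternative route is a rearrangement/variational argument: among convex bodies at a dilation position with prescribed $V(K),V(K,L),V(L)$ (equivalently prescribed $-t_1,-t_2$), show that $\int F(\rho)\,d\nu$ is minimized by the extremal body whose relative curvature radius takes only the two values $-t_1,-t_2$, for which \eqref{fc14} is an equality. Finally, for the equality clause, strict convexity of $F$ forces equality in the cut inequality on a set of levels $s$ of positive measure only when $\rho$ is $\nu$-a.e.\ constant; a constant relative curvature radius means $h_K+h_K''=c\,(h_L+h_L'')$, i.e.\ $K$ and $L$ are homothetic.
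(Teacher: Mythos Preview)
The paper does not prove Lemma~\ref{lem1}; it is quoted from Yang~\cite{1} and used as a black box. So there is no ``paper's own proof'' to compare against, and your plan must be judged on its own merits.

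Your convex-order reduction is sound: with $d\nu=\tfrac{1}{2V(L)}h_L(h_L+h_L'')\,d\theta$ one indeed has $\int\rho\,d\nu=V(K,L)/V(L)=\tfrac12[(-t_1)+(-t_2)]$, so affine $F$ give equality and it suffices to verify the cut inequality
\[
\int_0^{2\pi}(\rho-s)_+\,h_L(h_L+h_L'')\,d\theta\ \ge\ V(L)\bigl[(-t_1-s)_++(-t_2-s)_+\bigr]
\]
for every $s>0$. For $s\le -t_1$ this follows from $(\cdot)_+\ge(\cdot)$ and the first-moment identity; for $s\ge -t_2$ both sides are trivially $\ge 0$. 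The entire content of the lemma therefore sits in the range $-t_1<s<-t_2$, where you must show
\[
\int_0^{2\pi}(\rho-s)_+\,h_L(h_L+h_L'')\,d\theta\ \ge\ V(L)\,(-t_2-s).
\]
This is precisely the step you label ``the crux'' and then do not prove. Both routes you sketch are problematic as stated. The dilation-position hypothesis gives $r(K,L)\,h_L\le h_K\le R(K,L)\,h_L$, a two-sided bound on the \emph{ratio of support functions}; it says nothing directly about $\rho=(h_K+h_K'')/(h_L+h_L'')$, which involves second derivatives and can oscillate independently of $h_K/h_L$. Turning the pointwise support-function bound into control on the level sets of $\rho$ is exactly the analytic work that is missing, and it is where the Bonnesen-type input from \cite{1} (and, in the symmetric case, from Green--Osher~\cite{wuff}) actually enters. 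Your ``alternative route'' via a variational argument is circular unless you can independently identify the extremizers and prove they are two-valued in $\rho$; that is again the heart of the matter, not a bypass of it.

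A smaller point: your equality discussion is incomplete. Knowing that $\int(F-\ell)(\rho)\,d\nu=0$ for the chord $\ell$ through $-t_1,-t_2$ does not force $\rho$ to be constant, since $F-\ell$ changes sign on $(-t_1,-t_2)$; you would need equality across the full family of cut inequalities (hence the distribution of $\rho$ equal to $\tfrac12(\delta_{-t_1}+\delta_{-t_2})$, forcing $-t_1=-t_2$ by continuity of $\rho$) before you can conclude $\rho$ is constant.
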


 The cone-volume measure $V_K$ of $K\in\mathcal{K}^{n}_{0}$ is a Borel measure on the unit sphere $\mathbb{S}^{n-1}$ defined for a Borel set $\omega\subseteq \mathbb{S}^{n-1}$,  by
\begin{align*}
V_K\left(\omega\right)=\frac{1}{n}\int_{x\in v_K^{-1}\left(\omega\right)}{x\cdot v_k\left(x\right)d\mathcal{H}^{n-1}}\left(x\right),
\end{align*}
where $v_K: \partial K\rightarrow \mathbb{S}^{n-1}$ is the generalized Gauss map defined on $\partial K$, $\mathcal{H}^{n-1}$ is $(n-1)$-dimensional Hausdorff measure.
There are formulas
\begin{equation*}\label{a215}
dV_K=\frac{1}{n}h_KdS_K,
\end{equation*}
and
\begin{equation*}\label{a216}
V(K)=\frac{1}{n}\int_{\mathbb S^{n-1}}h_K(u)dS_K(u).
\end{equation*}
The mixed cone-volume measure $V_{K, L}$ of $K, L \in \mathcal{K}_{0}^{n}$ is introduced by Hu and Xiong \cite{HX}

\begin{equation*}
V_{K, L}\left(\omega\right)=\frac{1}{n}\int_{\omega}h_L(u)dS_K.
\end{equation*}
The total mass of $V_{K, L}$ is exactly the 1st mixed volume $V_{1}(K, L)$. In the Euclidean plane, it is clear that $V_{1}(K, L)=V_{1}(L, K)$, and we  write $V(K, L)$ instead of $V_{1}(K, L)$.

Let  $K$, $L \in \mathcal{K}^{n}$, $K$ and $L$ are at a \emph{dilation position} if the origin $o \in K\cap L$ and
\begin{equation*}
r(K, L)L\subset K \subset R(K, L)L.
\end{equation*}
where $r(K, L)$ and $R(K, L)$ are the inradius and outradius of $K$ with respect to $L$, i.e.
\begin{equation*}
r\left(K, L\right)=\max\{t>0:x+tL\subset K ~and~x\in \mathbb{R}^n\},
\end{equation*}
\begin{equation*}
R\left(K, L\right)=\min\{t>0:x+tL\supset K ~and~x\in \mathbb{R}^n\}.
\end{equation*}
The concept of dilation position is  based on solving the problem provided by G. Zhang in 2013, that is:~Let $K, L \in \mathcal{K}^{2}$, if there is  a ``suitable" position of the origin so that $K$ and  a ``suitable" translation of  $L$ satisfy
\begin{equation*}
V((1-\lambda)K +_0 \lambda L )\ge V(K)^{1-\lambda}V(L),
\end{equation*}
where $K$ and $L$ are origin-symmetric convex bodies in the plane.

%
%
%

\par The following is the general log-Minkowski inequality for planar convex bodies.
\begin{lem}\label{lem4}(\cite{2})
Let~$K,L\in \mathcal{K}^{2}$ with~$o\in K \cap L.$ If~$K$ and~$L$ are at a dilation position, then
\begin{equation}\label{fc15}
\int_{\mathbb{S}^{1}} \log \frac{h_L}{h_K} d V_K \geq \frac{V(K)}{2} \log \frac{V(L)}{V(K)}.
\end{equation}
 The equality holds when and only when~$K$ and~$L$ are dilates or~$K$ and~$L$ are parallelograms with parallel sides.
\end{lem}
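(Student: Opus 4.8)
The plan is to deduce the log-Minkowski inequality \eqref{fc15} from the planar log-Brunn-Minkowski inequality at a dilation position by a first-variation argument, treating the latter as the substantive input (it is the theorem of Xi and Leng \cite{2} that I would reprove). First I would introduce, for $t\in[0,1]$, the logarithmic combination $\langle K,L\rangle_t$, the convex body whose support function is $h_t=h_K^{1-t}h_L^{\,t}$, with time derivative $\dot h_t=\tfrac{d}{dt}h_t=h_t\log(h_L/h_K)$. Because $K$ and $L$ are at a dilation position, $o$ lies in the interior of both, so $h_K,h_L$ are positive and comparable on $\mathbb{S}^1$; hence $\langle K,L\rangle_t$ is well defined as a Wulff shape, with $\langle K,L\rangle_0=K$ and $\langle K,L\rangle_1=L$. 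By approximation it suffices to treat smooth, strictly convex $K,L$, the general case of $\mathcal{K}^2$ following by a limiting argument.

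The crux is to establish $V(\langle K,L\rangle_t)\ge V(K)^{1-t}V(L)^{t}$ for $t\in[0,1]$, equivalently the concavity of $\psi(t)=\log V(\langle K,L\rangle_t)$ on $[0,1]$; a concave $\psi$ lies above the chord joining $(0,\log V(K))$ and $(1,\log V(L))$, which is precisely the log-Brunn-Minkowski inequality. To prove concavity I would differentiate twice, using $V'(t)=\int_{\mathbb{S}^1}\dot h_t\,dS_{\langle K,L\rangle_t}$, and reduce the claim to the Brunn-Minkowski-type estimate $V''(t)V(t)\le V'(t)^2$. This concavity is where the entire difficulty of the non-symmetric planar log-Brunn-Minkowski inequality resides, and I expect it to be the main obstacle: the dilation-position hypothesis is exactly what forces the second-order inequality to hold in the non-symmetric setting, whereas it genuinely fails for general relative position. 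I would follow the BLYZ framework as adapted in \cite{2}, with the rigidity analysis governed by the uniqueness of cone-volume measures for bodies at a dilation position.

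Granting the concavity, \eqref{fc15} follows cleanly. Set $g(t)=\psi(t)-(1-t)\log V(K)-t\log V(L)$; then $g$ is concave (being $\psi$ minus an affine function), $g(0)=g(1)=0$, and $g\ge0$ on $[0,1]$ by the log-Brunn-Minkowski inequality, so its right derivative satisfies $g'(0^+)\ge0$. A routine first-variation computation, using $\dot h_0=h_K\log(h_L/h_K)$, $dS_K=(h_K+h_K'')\,d\theta$ and $dV_K=\tfrac12 h_K(h_K+h_K'')\,d\theta$, gives
\begin{equation*}
\frac{d}{dt}\Big|_{t=0}V(\langle K,L\rangle_t)=\int_{\mathbb{S}^1}\dot h_0\,dS_K=\int_{\mathbb{S}^1}h_K\log\frac{h_L}{h_K}\,(h_K+h_K'')\,d\theta=2\int_{\mathbb{S}^1}\log\frac{h_L}{h_K}\,dV_K.
\end{equation*}
Therefore $g'(0)=\frac{2}{V(K)}\int_{\mathbb{S}^1}\log\frac{h_L}{h_K}\,dV_K-\log\frac{V(L)}{V(K)}\ge0$, which is exactly the inequality \eqref{fc15}.

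For the equality case I would note that \eqref{fc15} is an equality if and only if $g'(0)=0$. Since $g$ is concave with $g(0)=0$, the condition $g'(0)=0$ makes the tangent line at $0$ the constant $0$, so $g(t)\le0$ on $[0,1]$; combined with $g\ge0$ this forces $g\equiv0$, i.e. equality throughout the log-Brunn-Minkowski inequality along the path $\langle K,L\rangle_t$. Invoking the equality characterization of the planar log-Brunn-Minkowski inequality then yields that $K$ and $L$ are dilates, or are parallelograms with parallel sides, which is the asserted equality condition. The only nonroutine ingredient in this last step is again the rigidity statement for log-Brunn-Minkowski, so the whole proof hinges on the concavity of $\psi$ together with its equality analysis.
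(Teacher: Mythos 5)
Your derivation of the inequality itself from the Xi--Leng log-Brunn-Minkowski theorem is essentially sound (and note the paper offers no proof of this lemma at all --- it simply quotes it from \cite{2}); but the load-bearing claim in your write-up, the concavity of $\psi(t)=\log V(\langle K,L\rangle_t)$, is both unproved and unnecessary for the inequality, while being fatally necessary for your equality case. For the inequality you only need $g(t)\ge 0=g(0)$ together with Aleksandrov's first-variation lemma for Wulff shapes, which gives $g'(0^+)=\lim_{t\to 0^+}g(t)/t\ge 0$; no concavity enters. But your rigidity argument --- ``$g'(0)=0$ and $g$ concave force $g\equiv 0$'' --- collapses without concavity: $g\ge 0$, $g(0)=0$, $g'(0^+)=0$ are all compatible with $g\not\equiv 0$ (take $g(t)=t^2(1-t)$), so strictness of log-Brunn-Minkowski at interior $t$ cannot be detected by the endpoint derivative. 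And concavity of $\psi$ does not follow formally from the endpoint inequality $g\ge 0$: you would need the log-Brunn-Minkowski inequality for each intermediate pair $\langle K,L\rangle_{t_1},\langle K,L\rangle_{t_2}$, which requires showing these Wulff shapes remain at a dilation position (not addressed), plus handling the fact that the logarithmic interpolation of Wulff shapes only yields an inclusion, since $h_{\langle K,L\rangle_t}\le h_K^{1-t}h_L^t$ in general. Your alternative plan of proving $V''V\le (V')^2$ directly is not how \cite{2} (or B\"or\"oczky--Lutwak--Yang--Zhang) proceeds --- in those papers the log-Minkowski inequality and its equality characterization come from a variational/uniqueness-of-cone-volume-measure analysis, not from second-order concavity along the geometric-mean path --- and such a local second-order inequality is not known for non-symmetric planar bodies at a dilation position.

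Two further concrete errors. First, a dilation position only requires $o\in K\cap L$, and Xi--Leng explicitly allow the origin to lie on the boundary; your assertion that ``$o$ lies in the interior of both, so $h_K,h_L$ are positive'' is false in general, and when $h_K$ vanishes somewhere both the integrand $\log(h_L/h_K)$ and the Wulff body $h_K^{1-t}h_L^t$ need the careful integrability analysis that \cite{2} actually carries out. Second, your reduction ``by approximation it suffices to treat smooth, strictly convex $K,L$'' is incompatible with the stated equality condition: the equality case of \eqref{fc15} includes parallelograms with parallel sides, which are neither smooth nor strictly convex, and equality characterizations do not survive limiting arguments --- an approximation scheme can at best recover the inequality, never the dichotomy ``dilates or parallelograms.'' So as written the proposal proves the inequality (modulo the boundary-origin issue) but leaves the entire rigidity statement, which is the delicate content of the lemma, unestablished.
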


\section{~ the log-Minkowski inequality of curvature entropy  with nonsymmetry  in $\mathbb R^{2}$}

%
%

%
%
%

\emph{Proof  of Theorem \ref{the1}.}
 Let ~$F(x)=-\log x$. By Lemma \ref{lem1}, (\ref{fc11}) and (\ref{fc12}), it follows that
\begin{equation*}
\begin{aligned}
\frac{1}{V(L)}\int^{2\pi}_{0} (-\log\frac{h_K+h_K^{\prime\prime}}{h_L+h_L^{\prime\prime}})h_L(\theta)(h_L(\theta)+h_L''(\theta)) d\theta
&\geq -\log(-t_1)-\log(-t_2)\\
&=-\log(t_1 t_2)\\
&=-\log\frac{V(K)}{V(L)}.
\end{aligned}
\end{equation*}
That is
\begin{equation*}
-\frac{2}{V(L)}\int_{\mathbb S^{1}}\log\frac{h_K+h_K^{\prime\prime}}{h_L+h_L^{\prime\prime}} d V_L \geq -\log\frac{V(K)}{V(L)}.
\end{equation*}
Thus
\begin{equation*}
\int_{\mathbb S^{1}} \log(\frac{h_K+h_K^{\prime\prime}}{h_L+h_L^{\prime\prime}}) d V_L \leq \frac{V(L)}{2}\log\frac{V(K)}{V(L)}.
\end{equation*}
By (\ref{E(K,L)=-}), (\ref{B1}) and (\ref{B2}) for planar case, we have
\begin{equation}\label{E(LK)}
E(L,K)\leq\frac{V(L)}{2}\log \frac{ V(K)}{ V(L)}.
\end{equation}
Note that $K$ and $L$ are at a dilation position, so
\begin{equation}\label{E(KL)}
E(K, L)\leq\frac{V(K)}{2}\log \frac{ V(L)}{ V(K)}.
\end{equation}
From the equality condition of (\ref{fc14}), it concludes that the equality holds in (\ref{fc16}) when and only when $K$ and $L$ are homothetic.
\qed

Especially, assume that $L=B$ is an unit Euclidean ball in $\mathbb{R}^2$, then

\begin{cor}\label{cor1}(\cite{wuff})
 Let $K \in \mathcal{K}_{0}^{2} $    be  smooth,  planar  strictly convex bodies, then
\begin{equation}\label{p}
\int_{\partial K} \kappa \log \kappa ds+ \pi \log\frac{V(K)}{\pi}\ge0,
\end{equation}
the equality holds when and only when $K$ is a ball.
\end{cor}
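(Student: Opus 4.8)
The plan is to read off the inequality as the special case $L=B$ of the nonsymmetric Green--Osher inequality, Lemma \ref{lem1}, applied to the strictly convex test function $F(x)=-\log x$ on $(0,+\infty)$. Since both $\int_{\partial K}\kappa\log\kappa\,ds$ and $V(K)$ are unchanged under translations of $K$, I would first arrange that $K$ and the unit ball $B$ sit at a dilation position (the hypothesis Lemma \ref{lem1} asks for); this costs nothing because neither the curvature nor the area is affected.

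With $L=B$ every ingredient of (\ref{fc14}) becomes explicit. As $h_B\equiv 1$ and $h_B''\equiv 0$, formula (\ref{B2}) gives the relative curvature radius $\rho(\theta)=\kappa_B/\kappa_K=1/\kappa_K$, the density simplifies to $h_B(h_B+h_B'')=1$, and $V(B)=\pi$. Reading off the product of the two roots of the relative Steiner polynomial (\ref{mix volume2}) for $L=B$ gives $t_1t_2=V(K)/V(B)=V(K)/\pi$, so that $F(-t_1)+F(-t_2)=-\log(t_1t_2)=-\log(V(K)/\pi)$. Substituting into (\ref{fc14}) and using $F(1/\kappa_K)=\log\kappa_K$ yields
\begin{equation*}
\frac{1}{\pi}\int_0^{2\pi}\log\kappa_K\,d\theta\ \geq\ -\log\frac{V(K)}{\pi}.
\end{equation*}
It then remains only to pass from the angular integral to the boundary integral: by (\ref{dSX}) the arclength element satisfies $ds=\frac{1}{\kappa}\,d\theta$, hence $\kappa\,ds=d\theta$ and $\int_{\partial K}\kappa\log\kappa\,ds=\int_0^{2\pi}\log\kappa_K\,d\theta$. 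Multiplying the displayed inequality by $\pi$ and rearranging gives exactly $\int_{\partial K}\kappa\log\kappa\,ds+\pi\log\frac{V(K)}{\pi}\geq 0$. The equality statement is inherited verbatim from the equality case of (\ref{fc14}): equality forces $K$ and $B$ to be homothetic, i.e. $K$ is a ball.

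The only genuinely delicate point is the dilation-position hypothesis, and I expect the work here to be in checking that passing to $L=B$ is harmless. Every quantity entering the final inequality — the relative curvature radius $1/\kappa_K$, the product $t_1t_2=V(K)/\pi$, and the integral $\int_0^{2\pi}\log\kappa_K\,d\theta$ — is translation invariant, and $h_B$ is constant, so the conclusion does not depend on the chosen position; in the ball case Lemma \ref{lem1} reduces to the classical Green--Osher inequality, which is valid for every convex body, so the dilation-position requirement is not an actual restriction. Aside from this remark the derivation is a routine specialization; alternatively, one can read the result off directly from the intermediate inequality (\ref{E(LK)}) in the proof of Theorem \ref{the1} by setting $L=B$, since a short computation gives $E(B,K)=-\tfrac12\int_{\partial K}\kappa\log\kappa\,ds$ and $\tfrac{V(B)}{2}\log\frac{V(K)}{V(B)}=\tfrac{\pi}{2}\log\frac{V(K)}{\pi}$.
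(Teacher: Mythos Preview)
Your proposal is correct and follows essentially the same route as the paper: both specialize the $F=-\log$ case of the Green--Osher inequality to $L=B$. The paper phrases this as applying inequality (\ref{E(LK)}) from the proof of Theorem \ref{the1} with $L=B$ and then computing $E(B,K)=-\tfrac12\int_{\partial K}\kappa\log\kappa\,ds$, which is exactly the alternative you sketch in your last sentence; your primary presentation simply unwinds one step further and invokes Lemma \ref{lem1} directly, which amounts to the same computation.
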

\begin{proof}
According to the definition of $E(L,K)$, it follows that

\begin{equation*}\begin{split}E(L,K)
&=-\int_{\mathbb S^{n-1}}\log\frac{H_{n-1}(K)}{H_{n-1}(L)}dV_{L}\\
&=-\frac{1}{n}\int_{\mathbb S^{n-1}}\log\frac{H_{n-1}(K)}{H_{n-1}(L)}h_{L}\det\big(h_{ij}(L)+h_{L}\delta_{ij}\big)du\\
&=-\frac{1}{n}\int_{\mathbb S^{n-1}}\log\frac{H_{n-1}(K)}{H_{n-1}(L)}h_{L}\frac{\det\big(h_{ij}(L)+h_{L}\delta_{ij}\big)}{\det\big(h_{ij}(K)+h_{K}\delta_{ij}\big)}\det\big(h_{ij}(K)+h_{K}\delta_{ij}\big)du\\
&=-\int_{\mathbb S^{n-1}}\frac{H_{n-1}(K)}{H_{n-1}(L)}\log\frac{H_{n-1}(K)}{H_{n-1}(L)}dV_{K, L}.
\end{split}
\end{equation*}
Taking $n=2$ and $L=B$, it yields
\begin{equation}\label{q1}
\begin{split}
E(B,K)
&=-\int_{\mathbb S^{1}}{\kappa}\log\kappa dV_{K, B}
\\&=-\frac{1}{2}\int_{\partial K}{\kappa}\log\kappa ds
\\&\le\frac{\pi}{2}\log \frac{ V(K)}{ \pi}.
\end{split}
\end{equation}
It follows that
\begin{equation*}
\int_{\partial K} \kappa \log \kappa ds+ \pi \log\frac{V(K)}{\pi}\ge0.
\end{equation*}
\end{proof}

\begin{cor}(\cite{wuff})
Let $K \in \mathcal{K}_{0}^{2}$  be  smooth,  planar  strictly convex bodies. Then
\begin{equation}
\int_{\partial K} \kappa \log \left(\kappa\sqrt\frac{V(K)}{\pi}\right) ds\ge0,
\end{equation}
the equality holds when and only when $K$ is a ball.
\end{cor}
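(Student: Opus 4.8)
The plan is to reduce this statement directly to the preceding Corollary \ref{cor1} by expanding the logarithm and invoking the total-curvature identity. First I would split the integrand using $\log(ab)=\log a+\log b$, writing
\begin{equation*}
\log\left(\kappa\sqrt{\tfrac{V(K)}{\pi}}\right)=\log\kappa+\tfrac{1}{2}\log\tfrac{V(K)}{\pi}.
\end{equation*}
Since $\tfrac{1}{2}\log\tfrac{V(K)}{\pi}$ is a constant that does not depend on the point of $\partial K$, the integral separates into
\begin{equation*}
\int_{\partial K}\kappa\log\left(\kappa\sqrt{\tfrac{V(K)}{\pi}}\right)ds=\int_{\partial K}\kappa\log\kappa\,ds+\tfrac{1}{2}\log\tfrac{V(K)}{\pi}\int_{\partial K}\kappa\,ds.
\end{equation*}

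The key geometric input is the total-curvature identity $\int_{\partial K}\kappa\,ds=2\pi$, valid for any smooth planar convex body. This follows from Gauss--Bonnet, or more directly from the support-function parametrization used throughout the paper: when $\partial K$ is traced by the outward normal angle $\theta$, one has $\kappa\,ds=d\theta$ (equivalently $ds=(h_K+h_K'')\,d\theta$ and $\kappa=(h_K+h_K'')^{-1}$), so the integral collapses to $\int_0^{2\pi}d\theta=2\pi$. Substituting this value and simplifying $\tfrac{1}{2}\cdot 2\pi=\pi$ yields
\begin{equation*}
\int_{\partial K}\kappa\log\left(\kappa\sqrt{\tfrac{V(K)}{\pi}}\right)ds=\int_{\partial K}\kappa\log\kappa\,ds+\pi\log\tfrac{V(K)}{\pi}.
\end{equation*}

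The right-hand side is precisely the left-hand side of inequality (\ref{p}) in Corollary \ref{cor1}, which asserts it is nonnegative. Hence the desired inequality holds, and the equality characterization is inherited verbatim: equality occurs if and only if $K$ is a ball. There is essentially no substantive obstacle here; the content of the corollary is Corollary \ref{cor1} rewritten after absorbing the volume factor into the logarithm. The only point requiring care is the total-curvature computation $\int_{\partial K}\kappa\,ds=2\pi$, which supplies the factor matching the $\pi\log(V(K)/\pi)$ term, and noting that the smoothness and strict convexity hypotheses guarantee $\kappa>0$ so that $\log\kappa$ is well defined along the entire boundary.
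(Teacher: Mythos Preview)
Your argument is correct and is essentially the paper's own proof run in the opposite direction: the paper starts from the expression in Corollary~\ref{cor1} and rewrites $\pi\log\tfrac{V(K)}{\pi}$ as $\tfrac12\int_{\partial K}\kappa\log\tfrac{V(K)}{\pi}\,ds$ (using $\kappa\,ds=d\theta$ and $\int_0^{2\pi}d\theta=2\pi$) before combining the logarithms, whereas you split the logarithm first and then invoke $\int_{\partial K}\kappa\,ds=2\pi$. The content and the equality case are identical.
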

\begin{proof}

Taking $ds=(h+h'')d\theta$, $\kappa=\frac{1}{h+h''}$  and $\int_{\partial K}d\theta =2\pi$ in (\ref{p}), we have
\begin{equation*}
\begin{aligned}
\int_{\partial K} \kappa \log \kappa ds+ \pi \log\frac{V(K)}{\pi}&=\int_{\partial K} \kappa \log \kappa ds+\frac{1}{2}\log\frac{V(K)}{\pi}\int_{\partial K}d\theta
\\&=\int_{\partial K} \kappa \log \kappa ds+\frac{1}{2}\int_{\partial K} \frac{1}{h+h''}(h+h'')\log\frac{V(K)}{\pi}d\theta
\\&=\int_{\partial K} \kappa \log \kappa ds+\frac{1}{2}\int_{\partial K} \kappa\log\frac{V(K)}{\pi}ds
\\&=\int_{\partial K} \kappa \log \left(\kappa\sqrt\frac{V(K)}{\pi}\right) ds
\\&\ge0.
\end{aligned}
\end{equation*}
\end{proof}

\section{The uniqueness  for the planar Cone-volume Measures  with nonsymmetry}

\emph{Proof of Theorem \ref{the2}.}
Assuming~$V_{K}=V_{L},$ it is obvious that~$V(K)=V(L).$ From (\ref{E(LK)}) we have
$$
E(L,K)\leq \frac{V(L)}{2} \log {\frac{V(K)}{V(L)}}=0,
$$
that is
\begin{equation}\label{fc17}
\int_{\mathbb{S}^{1}}\log (h_K +h''_K)d V_L \leq \int_{\mathbb{S}^{1}} \log(h_L +h''_L) d V_L.
\end{equation}
Similarly, by (\ref{E(KL)})
$$
E(K,L)\leq \frac{V(K)}{2} \log {\frac{V(L)}{V(K)}}=0,
$$
which is equivalent to
\begin{equation}\label{fc18}
\int_{\mathbb{S}^{1}}\log (h_L +h''_L)d V_K \leq \int_{\mathbb{S}^{1}} \log(h_K +h''_K) d V_K.
\end{equation}
By (\ref{fc17}), (\ref{fc18}) and~$V_K=V_L,$ then
\begin{equation*}
\begin{aligned}
\int_{\mathbb{S}^{1}}\log (h_L +h''_L)d V_K
&\leq \int_{\mathbb{S}^{1}} \log(h_K +h''_K) d V_K \\
&=\int_{\mathbb{S}^{1}} \log(h_K +h''_K) d V_L \\
&\leq \int_{\mathbb{S}^{1}} \log(h_L +h''_L) d V_L \\
&=\int_{\mathbb{S}^{1}} \log(h_L +h''_L) d V_K.
\end{aligned}
\end{equation*}
So
\begin{equation*}
\int_{\mathbb{S}^{1}} \log(h_L +h''_L) d V_K =\int_{\mathbb{S}^{1}} \log(h_K +h''_K) d V_K.
\end{equation*}
It yields
\begin{equation*}
E(K,L)=\frac{V(K)}{2} \log \frac{V(L)}{V(K)}=0.
\end{equation*}
By Theorem \ref{the1}, it follows that~$K$ and~$L$ are homothetic. Because of~$V(K)=V(L),$ we conclude that~$K=L.$
\qed

\section{Equivalence between uniqueness for cone-volume measures, log-Minkowski inequality of volume and log-Minkowski inequality of
 curvature entropy in $\mathbb R^{2}$ }

\emph{Proof of Theorem \ref{the3}.}
By Lemma \ref{lem4}, if $K, L\in \mathcal{K}^{2}_{0}$  are at a dilation position, it concludes that
 the uniqueness of the cone volume measure infers  the log-Minkowski inequality in $\mathbb R^{2}$. In order to prove that the uniqueness for cone-volume measures, the log-Minkowski inequality   of volume and the log-Minkowski inequality of curvature entropy are equivalent in $\mathbb R^{2}$,  we only need to prove that the log-Minkowski inequality of volume can deduce the log-Minkowski inequality of curvature entropy.

By the definition of $E(K,L)$, we have
\begin{equation}\label{nn}
\begin{aligned}
\int_{\mathbb{S}^{1}} \log\frac{h_L}{h_K}d V_K +E(K,L)&=\int_{\mathbb{S}^{1}} \log \frac{h_L(h_L+h''_L)}{h_K(h_K+h''_K)}d V_K
\\&\le V(K)\log\left(\frac{\int_{\mathbb{S}^{1}}\frac{h_{L}(h_{L}+h''_{L})}{h_{K}(h_{K}+h''_{K})}dV_{K}}{V(K)}\right)
\\&=V(K)\log\frac{V(L)}{V(K)},
\end{aligned}
\end{equation}
where the second step in (\ref{nn}) is due to Jensen's inequality. It follows that
\begin{equation*}
\int_{\mathbb{S}^{1}}\log\left(\frac{h_{L}}{h_{K}}\right) dV_{K}\leq V(K)\log\frac{V(L)}{V(K)}-E(K,L).
 \end{equation*}
By the log-Minkowski inequality (\ref{fc15}), we obtain
\begin{equation}\label{fc19}
\frac{V(K)}{2}\log\frac{V(L)}{V(K)} \leq\int_{\mathbb{S}^{1}}\log\left(\frac{h_{L}}{h_{K}}\right) dV_{K}\leq V(K)\log\frac{V(L)}{V(K)}-E(K,L).
\end{equation}
Then
\begin{equation*}\label{2231}
E(K,L)\leq\frac{V(K)}{2}\log \frac{ V(L)}{ V(K)}.
\end{equation*}

Next, we consider the equality condition of the log-Minkowski inequality for  curvature entropy.~Suppose that
\begin{equation*}
E(K,L)=\frac{V(K)}{2}\log \frac{ V(L)}{ V(K)}.
\end{equation*}
From (\ref{fc19}) it leads to
\begin{equation*}
\frac{V(K)}{2}\log\frac{V(L)}{V(K)}\leq\int_{\mathbb{S}^{1}}\log\left(\frac{h_{L}}{h_{K}}\right) dV_{K}\leq \frac{V(K)}{2}\log\frac{V(L)}{V(K)}.
\end{equation*}
i.e.
\begin{equation*}
\int_{\mathbb{S}^{1}}\log\left(\frac{h_{L}}{h_{K}}\right) dV_{K}=\frac{V(K)}{2}\log\frac{V(L)}{V(K)},
\end{equation*}
then $K$ and $L$ are dilates. In addition, if $K$ and $L$ are dilates, $E(K,L)=\frac{V(K)}{2}\log \frac{ V(L)}{ V(K)}$  obviously holds.
\qed

\section{~The entropy inequality  in $\mathbb R^{n}$ and some remarks}
In this section, an entropy inequality under a more general condition in $\mathbb{R}^n$ is established. The conditions of dilation position and symmetry are removed.

\emph{Proof of Theorem \ref{xx}.}
Note that
\begin{align}\label{fc1}
\int_{\mathbb S^{n-1}}\frac{H_{n-1}(L)}{H_{n-1}(K)}\log{\frac{H_{n-1}(L)}{H_{n-1}(K)}} dV_{L,K} =\int_{\mathbb S^{n-1}}\log{\frac{H_{n-1}(L)}{H_{n-1}(K)}}dV_K.
\end{align}
By Lebesgue's dominated convergence theorem,~as $p\rightarrow \infty,$
\begin{align*}
\int_{\mathbb S^{n-1}}\left(\frac{H_{n-1}(L)}{H_{n-1}(K)}\right)^{\frac{p}{p+n}}dV_{L,K} \rightarrow V(K),
\end{align*}
and
\begin{align*}
\int_{\mathbb S^{n-1}}\left(\frac{H_{n-1}(L)}{H_{n-1}(K)}\right)^{\frac{p}{p+n}}\log{\frac{H_{n-1}(L)}{H_{n-1}(K)}} dV_{L,K}\rightarrow \int_{\mathbb S^{n-1}}\left(\frac{H_{n-1}(L)}{H_{n-1}(K)}\right)\log{\frac{H_{n-1}(L)}{H_{n-1}(K)}} dV_{L,K}.
\end{align*}
Consider the function $f_{K,L}:[1,\infty]\rightarrow \mathbb R$ defined by
\begin{align*}
f_{K,L}(p)=\frac{1}{V(K)}\int_{\mathbb S^{n-1}}\left(\frac{H_{n-1}(L)}{H_{n-1}(K)}\right)^{\frac{p}{p+n}}dV_{L,K},
\end{align*}
and calculate, using L'H$\hat{o}$pital's rule,
$$
\begin{aligned}
\lim_{p\rightarrow \infty} \log(f_{K,L}(p))^{p+n}
&=\lim_{p\rightarrow \infty}\frac{\log{f_{K,L}(p)}}{\frac{1}{p+n}}\\
&=\lim_{p\rightarrow \infty}\frac{f^{'}_{K,L}(p)}{-\frac{f_{K,L}(p)}{(p+n)^2}}
\\&=\lim_{p\rightarrow \infty}{\frac{\frac{n}{(p+n)^{2}}\int_{\mathbb S^{n-1}}(\frac{H_{n-1}(L)}{H_{n-1}(K)})^{\frac{p}{p+n}}\log{\frac{H_{n-1}(L)}{H_{n-1}(K)}}dV_{L,K}}{\frac{-f_{K,L}(p)V(K)}{(p+n)^{2}}}}\\
&=-\frac{n}{V(K)}\int_{\mathbb S^{n-1}}\frac{H_{n-1}(L)}{H_{n-1}(K)}\log{\frac{H_{n-1}(L)}{H_{n-1}(K)}}dV_{L,K}.
\end{aligned}
$$
Consequently, we have

\begin{equation}\label{fc2}
\begin{aligned}
\exp\left[-\frac{n}{V(K)}\int_{\mathbb S^{n-1}}\frac{H_{n-1}(L)}{H_{n-1}(K)}\log{\frac{H_{n-1}(L)}{H_{n-1}(K)}}dV_{L,K}\right]\\
=\lim_{p\rightarrow \infty}\left[\frac{1}{V(K)}\int_{\mathbb S^{n-1}}\left(\frac{H_{n-1}(L)}{H_{n-1}(K)}\right)^{\frac{p}{p+n}}dV_{L,K}\right]^{p+n}.
\end{aligned}
\end{equation}
Via H$\ddot{o}$lder's inequality
\begin{align*}
\left[\int_{\mathbb S^{n-1}}\left(\frac{H_{n-1}(L)}{H_{n-1}(K)}\right)^{\frac{p}{p+n}}dV_{L,K}\right]^{\frac{p+n}{p}}\cdot \left[\int_{\mathbb S^{n-1}}dV_{L,K}\right]^{-\frac{n}{p}}\leq \int_{\mathbb S^{n-1}}\frac{H_{n-1}(L)}{H_{n-1}(K)}dV_{L,K}=V(K),
\end{align*}
which is equivalent to
\begin{align*}
\left[\frac{1}{V(K)}\int_{\mathbb S^{n-1}}\left(\frac{H_{n-1}(L)}{H_{n-1}(K)}\right)^{\frac{p}{p+n}}dV_{L,K}\right]^{\frac{p+n}{p}}\leq \frac{V(K){V_1(L,K)}^{\frac{n}{p}}}{{V(K)}^{\frac{p+n}{p}}}=\left[\frac{V_1(L,K)}{V(K)}\right]^{\frac{n}{p}}.
\end{align*}
It implies that
\begin{align*}
\lim_{p\rightarrow \infty}\left[\frac{1}{V(K)}\int_{\mathbb S^{n-1}}\left(\frac{H_{n-1}(L)}{H_{n-1}(K)}\right)^{\frac{p}{p+n}}dV_{L,K}\right]^{p+n}\leq \left[\frac{V_1(L,K)}{V(K)}\right]^{n}.
\end{align*}
By (\ref{fc2}),~we have
\begin{align*}
\exp\left[-\frac{n}{V(K)}\int_{\mathbb S^{n-1}}\frac{H_{n-1}(L)}{H_{n-1}(K)}\log{\frac{H_{n-1}(L)}{H_{n-1}(K)}}dV_{L,K}\right]\leq \left[\frac{V_1(L,K)}{V(K)}\right]^{n},
\end{align*}
it yields
\begin{equation*}
-\frac{n}{V(K)}\int_{\mathbb S^{n-1}}\frac{H_{n-1}(L)}{H_{n-1}(K)}\log{\frac{H_{n-1}(L)}{H_{n-1}(K)}}dV_{L,K}\le n\log\frac{V_1(L,K)}{V(K)},
\end{equation*}
that is
\begin{align*}
-\int_{\mathbb S^{n-1}}\frac{H_{n-1}(L)}{H_{n-1}(K)}\log{\frac{H_{n-1}(L)}{H_{n-1}(K)}}dV_{L,K}\leq V(K)\log\left(\frac{V_1(L,K)}{V(K)}\right).
\end{align*}
By (\ref{fc1})
\begin{align*}
-\int_{\mathbb S^{n-1}}\log{\frac{H_{n-1}(L)}{H_{n-1}(K)}}dV_K \leq V(K)\log\left(\frac{V_1(L,K)}{V(K)}\right),
\end{align*}
thus
\begin{align*}
E(K,L)\leq V(K)\log\frac{V_1(L,K)}{V(K)}.
\end{align*}
\qed

\end{document}